\documentclass[11pt,a4paper]{article}
\usepackage{epsf,epsfig,amsfonts,amsgen,amsmath,amstext,amsbsy,amsopn,amsthm}
\usepackage{amsmath}
\usepackage{enumerate}
\usepackage{hhline}
\usepackage{multirow}
\usepackage{multicol}
\usepackage{booktabs}
\usepackage{lineno}
\usepackage{amsfonts,amsthm,amssymb,bm}
\usepackage{amsfonts}
\usepackage{graphics}
\usepackage{latexsym,bm}
\usepackage{amsfonts,amsthm,amssymb,bbding}
\usepackage{indentfirst}
\usepackage{graphicx}
\usepackage{color}
\usepackage[colorlinks=true,anchorcolor=blue,filecolor=blue,linkcolor=red,urlcolor=blue,citecolor=blue]{hyperref}
\usepackage{float}
\usepackage{tikz,enumerate}
\usetikzlibrary{calc}
\usepackage{geometry}
\newcommand{\n}{\noindent}

\newtheorem{thm}{Theorem}[section]

\newtheorem{lemma}[thm]{Lemma}

\newtheorem{pro}[thm]{Problem}

\newtheorem{cor}[thm]{Corollary}

\newtheorem{prop}[thm]{Proposition}

\newtheorem{obs}{Observation}[section]

\renewcommand{\le}{\leqslant}
\renewcommand{\geq}{\geqslant}
\renewcommand{\ge}{\geqslant}

\newcommand{\calG}{\mathcal{G}}

\newcommand{\N}{\mathbb{N}}

\begin{document}

\title{A non-hereditary Pollyanna class that is not strongly Pollyanna}

\author{
Hongzhang Chen\thanks{School of Mathematics and Statistics, Gansu
Center for Applied Mathematics, Lanzhou University, Lanzhou, Gansu,
730000, China. Email: \url{mnhzchern@gmail.com}.} 
\and 
Kaiyang Lan\thanks{Corresponding author. School of Mathematics and Statistics, Minnan Normal University, Zhangzhou, Fujian, 363000, China. Email: \url{kylan95@126.com}. Partially supported by the Youth Foundation of Fujian Province (Grant No. JZ240035) and the Minnan Normal University Foundation (Grant No. KJ2023002).} 
 }

\date{\today}
\maketitle



\begin{abstract}
	Chudnovsky, Cook, Davies, and Oum introduced the notion of \textit{Pollyanna} graph classes: a class $\mathcal{C}$ is Pollyanna if for every $\chi$-bounded class $\mathcal{F}$, the intersection $\mathcal{C} \cap \mathcal{F}$ is polynomially $\chi$-bounded.
	They further defined $\mathcal{C}$ to be \textit{strongly Pollyanna} if it is $k$-strongly Pollyanna for some integer $k$, meaning that $\mathcal{C} \cap \mathcal{F}$ is polynomially $\chi$-bounded for every $k$-good class $\mathcal{F}$.
	They asked whether there are Pollyanna graph classes that are not strongly Pollyanna.
	In this note we answer this question affirmatively, under the literal interpretation that graph classes are not required to be hereditary.
	We construct a class $\mathcal{C}$ that is Pollyanna but, for every $k \ge 1$, is not $k$-strongly Pollyanna; in particular $\mathcal{C}$ is not strongly Pollyanna.
\end{abstract}

{\bf Keywords: Pollyanna class, $\chi$-boundedness, polynomial $\chi$-boundedness} 

{\bf 2020 AMS Subject Classifications: 05C15, 05C75} 

\section{Introduction}

All graphs in this note are finite, simple, and undirected.
The \emph{chromatic number} $\chi(G)$ of a graph $G$ is the minimum number of colors needed to color its vertices so that no two adjacent vertices share the same color. A \emph{clique} in $G$ is a set of pairwise adjacent vertices, and the \emph{clique number} $\omega(G)$ is the size of a largest clique in $G$. It is clear that $\chi(G) \geq \omega(G)$, but in general $\chi(G)$ cannot be bounded from above by any function of $\omega(G)$; for instance, there are triangle-free graphs with arbitrarily large chromatic number~\cite{Erdos1959,Mycielski1955,Zykov1949}.
Empty induced subgraphs will not play any role; when they occur formally, we use the harmless conventions $\chi(\emptyset)=\omega(\emptyset)=0$.

A graph class $\mathcal{F}$ is \emph{$\chi$-bounded}~\cite{Gyarfas1975} if there exists a function $\varphi: \mathbb{N} \to \mathbb{N}$ such that for every $G \in \mathcal{F}$ and every induced subgraph $H$ of $G$, we have $\chi(H) \le \varphi(\omega(H))$.
Such a function $\varphi$ is called a \emph{$\chi$-bounding function} for $\mathcal{F}$.
The class $\mathcal{F}$ is \emph{polynomially $\chi$-bounded} if $\varphi$ can be chosen to be bounded above by a polynomial function in $\omega(H)$; equivalently, there exists a polynomial function $p$ such that $\chi(H) \le p(\omega(H))$ for every $G \in \mathcal{F}$ and every induced subgraph $H$ of $G$.
Esperet~\cite{Esperet2017} conjectured that every $\chi$-bounded class is polynomially $\chi$-bounded. This conjecture was recently disproved by Bria\'nski, Davies, and Walczak~\cite{BrianskiDaviesWalczak2024}, who constructed $\chi$-bounded classes that are not polynomially $\chi$-bounded.

Nevertheless, inspired by Esperet's conjecture, Chudnovsky, Cook, Davies, and Oum~\cite{CCDO2026} introduced the notion of \emph{Pollyanna} classes. 
A class $\mathcal{C}$ of graphs is called \emph{Pollyanna} if $\mathcal{C} \cap \mathcal{F}$ is polynomially $\chi$-bounded for every $\chi$-bounded class $\mathcal{F}$. 
In other words, $\mathcal{C}$ is Pollyanna if Esperet's conjecture holds within $\mathcal{C}$. 
Clearly, every polynomially $\chi$-bounded class is Pollyanna. 

A key notion in \cite{CCDO2026} is that of $n$-good classes. 
For an integer $n$, let $\chi^{(n)}(G)$ be the maximum chromatic number of an induced subgraph $H$ of $G$ with $\omega(H) \le n$. 
A class $\mathcal{F}$ of graphs is called $n$-\emph{good} if it is hereditary and there exists a constant $m$ (depending on $\mathcal{F}$ and $n$) such that $\chi^{(n)}(G) \le m$ for every $G \in \mathcal{F}$.
Note that $n$-goodness is a strictly weaker condition than $\chi$-boundedness.
Using this, a stronger property is defined in~\cite{CCDO2026}: a class $\mathcal{C}$ is \emph{$n$-strongly Pollyanna} if $\mathcal{C} \cap \mathcal{F}$ is polynomially $\chi$-bounded for every $n$-good class $\mathcal{F}$.
A class $\mathcal{C}$ is \emph{strongly Pollyanna} if it is $n$-strongly Pollyanna for some $n$.

Theorem 1.2 of~\cite{CCDO2026} shows that several natural graph classes are (strongly) Pollyanna, while Theorem 1.3 shows that several proper classes of graphs are not Pollyanna.
Section 10 of~\cite{CCDO2026} proposes several problems that remain open.
One of them is the following.

\begin{pro}[\cite{CCDO2026}]\label{problem10.1}
	Are there Pollyanna graph classes that are not strongly Pollyanna?
\end{pro}

The main result of this note answers Problem~\ref{problem10.1} affirmatively under the literal convention that graph classes are not required to be hereditary.

\begin{thm}\label{thm:main}
	There exists a graph class $\mathcal{C}$ such that $\mathcal{C}$ is Pollyanna but $\mathcal{C}$ is not $k$-strongly Pollyanna for any integer $k \ge 1$. In particular, $\mathcal{C}$ is not strongly Pollyanna. The class $\mathcal{C}$ constructed here is not hereditary.
\end{thm}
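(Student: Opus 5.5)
The class $\mathcal{C}$ will be designed so that membership in $\mathcal{C}$ cannot coexist with $\chi$-boundedness except on a finite set. Pollyanna-ness then becomes trivial: a finite class is polynomially $\chi$-bounded, with the constant polynomial $\max\chi$ as bound. Failure of $k$-strong Pollyanna-ness will come from an explicit $k$-good class $\mathcal{F}_k$ meeting $\mathcal{C}$ in a non-$\chi$-bounded family. The point of non-heredity is that a single member of $\mathcal{C}$ may carry a ``certificate'' component that is harmless for $k$-goodness at one fixed $k$. That component nevertheless kills $\chi$-boundedness once infinitely many $k$'s occur.

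\emph{Ingredients.} For each $k\ge 1$ I need a hereditary $k$-good class $\mathcal{G}_k$ that is not $\chi$-bounded. I will take this as the input result: the paper notes that $k$-goodness is strictly weaker than $\chi$-boundedness. For $k=1$ triangle-free graphs of large chromatic number suffice. For general $k$ I would cite the known constructions of graphs of large chromatic number all of whose induced subgraphs of clique number at most $k$ have bounded chromatic number (Carbonero--Hompe--Moore--Spirkl type and its generalisations, or the constructions in~\cite{CCDO2026}).

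Since $\mathcal{G}_k$ is not $\chi$-bounded, there is an integer $c_k$ and graphs $A_{k,n}\in\mathcal{G}_k$ with $\omega(A_{k,n})=c_k$ and $\chi(A_{k,n})\ge n$ for all $n$. Necessarily $c_k>k$. Also fix triangle-free graphs $B_k$ with $\chi(B_k)=k$. I then define
\[
\mathcal{C}=\{G_{k,n}: k,n\ge 1\},\qquad G_{k,n}=A_{k,n}\sqcup B_k .
\]
Securing the ingredient $\mathcal{G}_k$ for every $k$ is the step I expect to be the real obstacle. If no citable construction covers all $k$, I would try to build $A_{k,n}$ by iterating a Burling/shift-graph type construction with cliques substituted appropriately, verifying $k$-goodness by induction on $k$.

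\emph{Pollyanna.} Let $\mathcal{F}$ be $\chi$-bounded with bounding function $\varphi$. If $G_{k,n}\in\mathcal{F}$, then $B_k$ is an induced subgraph with $\omega=2$, so $k\le\varphi(2)$. Likewise $A_{k,n}$ is an induced subgraph, so $n\le\chi(A_{k,n})\le\varphi(c_k)$. Hence $\mathcal{C}\cap\mathcal{F}$ is finite. Every induced subgraph $H$ of a member then has $\chi(H)\le M$ for a fixed constant $M$. Nonempty $H$ has $\omega(H)\ge1$, so the polynomial $p\equiv M$ works, and $\mathcal{C}\cap\mathcal{F}$ is polynomially $\chi$-bounded.

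\emph{Not $k$-strongly Pollyanna.} Fix $k$ and let $\mathcal{F}_k$ be the hereditary closure of $\{G_{k,n}:n\ge1\}$. An induced subgraph $H$ with $\omega(H)\le k$ is a disjoint union of an induced subgraph of some $A_{k,n}$ with clique number at most $k$, and an induced subgraph of $B_k$. The first part has $\chi\le m_k$ by $k$-goodness of $\mathcal{G}_k$, and the second has $\chi\le k$. So $\chi(H)\le\max(m_k,k)$ and $\mathcal{F}_k$ is $k$-good. Yet $\mathcal{C}\cap\mathcal{F}_k\supseteq\{G_{k,n}\}_n$. These graphs have the constant clique number $\max(c_k,2)$ and chromatic number at least $n$, so this intersection is not even $\chi$-bounded, let alone polynomially. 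Finally, $\mathcal{C}$ is not hereditary, since $A_{k,n}$ alone is not of the form $A_{k',n'}\sqcup B_{k'}$ once $B_k$ is chosen non-isomorphic to anything appearing otherwise; a trivial adjustment of the $B_k$ ensures this.
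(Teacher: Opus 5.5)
Your proposal is correct and is essentially the paper's proof. Both attach to a non-$\chi$-bounded graph at level $k$ a disjoint triangle-free tag whose chromatic number encodes the level, get Pollyanna-ness from $\chi(B_k)\le\varphi(2)$, and defeat $k$-strong Pollyanna-ness with the hereditary closure of a single level; you get $\mathcal{C}\cap\mathcal{F}$ finite, where the paper only needs a uniform bound on $\chi$.

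The ingredient you flag as the obstacle is exactly what the paper takes from the Bria\'nski--Davies--Walczak theorem, with $f(n)$ finite for $n\le k$ and $f(n)=\infty$ for $n\ge k+1$. Your non-heredity argument is shaky as written; it is settled cleanly by noting that $K_1$ is an induced subgraph of every member of $\mathcal{C}$ but is not itself a member.
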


The construction has two ingredients. 
First, for each integer $r \ge 2$, we use a theorem of Bri{\'n}ski, Davies, and Walczak~\cite{BrianskiDaviesWalczak2024} to obtain a family of graphs with clique number exactly $r$ and arbitrarily large chromatic number, while every induced subgraph with clique number at most $r-1$ has bounded chromatic number. 
Second, we take a triangle-free graph $T_r$ of chromatic number $r$ (a Mycielski graph~\cite{Mycielski1955}) and attach it as a disjoint copy to each graph in the family. 
The tag $T_r$ does not affect the clique number (since it is triangle-free) but reveals the parameter $r$ to any $\chi$-bounding function, because $\chi(T_r)=r$ forces $r$ to be bounded when the class is intersected with a $\chi$-bounded test class.

\section{Tools}\label{sec:tools}

This section records the two external or classical inputs used in the construction.
The first is the theorem of Bria\'nski, Davies, and Walczak \cite{BrianskiDaviesWalczak2024}.

For a graph class $\calG$ and an integer $n\ge1$, define
\[
\chi_{\max}(\calG,n)
=
\sup\{\chi(G):G\in\calG \text{ and } \omega(G)=n\}
\in \N\cup\{\infty\}.
\]
If $\chi_{\max}(\mathcal{G}, n) = \infty$, this means that graphs in $\mathcal{G}$ with clique number exactly $n$ have unbounded chromatic number.


\begin{thm}[Briański--Davies--Walczak \cite{BrianskiDaviesWalczak2024}]\label{thm:BDW}
	Let $f : \mathbb{N} \to \mathbb{N} \cup \{\infty\}$ be such that $f(1) = 1$ and $f(n) \geqslant \binom{3n+1}{3}$ for every $n \geqslant 2$. Then there exists a hereditary class of graphs $\mathcal{G}$ such that $\chi_{\max}(\calG,n) = f(n)$ for every $n \in \mathbb{N}$.
\end{thm}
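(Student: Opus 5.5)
The proof splits into a reduction to one construction, followed by that construction, which is the real content. The plan is to build, for each $n\ge 2$, a family $\mathcal{H}_n$ of graphs satisfying three conditions. First, every $G\in\mathcal{H}_n$ has $\omega(G)=n$. Second, $\chi$ is unbounded on $\mathcal{H}_n$. Third, every induced subgraph $H$ of a member of $\mathcal{H}_n$ with $\omega(H)=k<n$ satisfies $\chi(H)\le\binom{3k+1}{3}$. Given such families, I will derive the theorem by a trimming and closure argument, so the whole difficulty is concentrated in constructing $\mathcal{H}_n$.

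\emph{Reduction.} Fix $f$ and $n\ge2$. If $f(n)=\infty$, put $\mathcal{S}_n=\mathcal{H}_n$. If $f(n)<\infty$, pick $G\in\mathcal{H}_n$ with $\chi(G)\ge f(n)$ and fix a clique $Q$ of size $n$ in $G$. I then delete the vertices outside $Q$ one at a time. Each deletion lowers $\chi$ by at most one, the clique number stays exactly $n$, and the process ends at $K_n$ with $\chi=n\le f(n)$. Hence some intermediate graph $G'$ has $\omega(G')=n$ and $\chi(G')=f(n)$; put $\mathcal{S}_n=\{G'\}$. Let $\mathcal{G}$ be the hereditary closure of $\{K_1\}\cup\bigcup_{n\ge2}\mathcal{S}_n$.
\begin{itemize}
\item \emph{Lower bound.} $\mathcal{S}_n$ witnesses $\chi_{\max}(\mathcal{G},n)\ge f(n)$, and $\chi_{\max}(\mathcal{G},1)=1=f(1)$ trivially.
\item \emph{Upper bound.} Let $H\in\mathcal{G}$ have $\omega(H)=k$, so $H$ is an induced subgraph of some member of $\mathcal{S}_m$, and it suffices to treat $H$ connected.
\begin{itemize}
\item If $m>k$, the defining property of $\mathcal{H}_m$ gives $\chi(H)\le\binom{3k+1}{3}\le f(k)$.
\item If $m=k$, then $\chi(H)\le\chi(G')=f(k)$, or there is nothing to check when $f(k)=\infty$.
\end{itemize}
\end{itemize}
Thus $\chi_{\max}(\mathcal{G},n)=f(n)$ for every $n$.

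\emph{Construction of $\mathcal{H}_n$ (main obstacle).} The trivial choice of any $\omega=n$ graphs of large chromatic number fails, since their induced subgraphs of smaller clique number may themselves have huge chromatic number. The construction must make high chromatic number appear \emph{only} at the top clique level. Informally, destroying all $n$-cliques should collapse $\chi$ to a polynomial in the remaining clique number.

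I would first try a layered, Burling-type or shift-graph-type construction. The vertices are ordered objects, such as intervals or tuples from a large ground set. Adjacency is defined so that cliques correspond to nested chains, which caps $\omega$ at $n$ by limiting the chain length, and the Ramsey-type mechanism that forces large $\chi$ requires chains of full length $n$. For a subgraph $H$ with $\omega(H)=k$, I would then colour each vertex by a combinatorial signature: the positions of at most three distinguished indices along a longest chain through it, chosen from $3k+1$ possible values. This gives at most $\binom{3k+1}{3}$ colours, and I must show that vertices with equal signatures are non-adjacent.

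Verifying that this signature colouring is proper on every induced subgraph, not just on the whole graph, is the step I expect to be hardest. The signatures must be defined intrinsically from $H$, for example from the longest chain ending at a vertex inside $H$, rather than inherited from $G$.
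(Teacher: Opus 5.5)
The paper gives no proof of this statement. It is quoted from Bria\'nski, Davies and Walczak and used only as a black box. Judged on what a proof requires, your proposal has a genuine gap.

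\textbf{The reduction is sound.} Trimming a member of $\mathcal{H}_n$ with $\chi\ge f(n)$ down towards a fixed $n$-clique does reach $\chi=f(n)$ exactly while keeping $\omega=n$. Your split into $m>k$ versus $m=k$ then correctly gives $\chi(H)\le f(k)$ for every $H\in\mathcal{G}$ with $\omega(H)=k$, with $k=1$ covered trivially. You should delete the aside ``it suffices to treat $H$ connected''. Since $f$ need not be monotone, a component of clique number $j<k$ is only bounded by $f(j)$, which can exceed $f(k)$. Fortunately your case analysis never uses that aside.

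\textbf{The missing part is the construction of $\mathcal{H}_n$.} The reduction is only bookkeeping; the entire content of the theorem is the existence of the families $\mathcal{H}_n$. For these you give no vertex set, no adjacency rule, and no proof of any of your three conditions. The ``signature'' colouring is reverse-engineered from the shape of $\binom{3k+1}{3}$, is never defined, and you concede that its properness is open.

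This is not something a routine Burling- or shift-graph argument fills in. $\mathcal{H}_2$ is trivial: Mycielski graphs work, since your third condition then only concerns edgeless subgraphs. But $\mathcal{H}_3$ is already a family of $K_4$-free graphs of unbounded chromatic number whose triangle-free induced subgraphs all have bounded chromatic number. That is a form of the Carbonero--Hompe--Moore--Spirkl counterexample, which disproved a well-known conjecture. Bria\'nski, Davies and Walczak's contribution is precisely to extend that idea to every $n$.

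Your heuristic also points the wrong way as stated. If adjacency is literally comparability (for instance nesting of intervals), so that cliques are exactly chains, then every member of $\mathcal{H}_n$ is a comparability graph. It is therefore perfect, with $\chi=\omega=n$, and cannot have unbounded $\chi$. Large chromatic number thus requires a non-transitive adjacency rule. Under such a rule nothing guarantees that a colouring read off from positions along chains is proper. Still less does it guarantee a number of colours depending only on $\omega(H)$, uniformly over \emph{all} induced subgraphs $H$ with $\omega(H)<n$. Designing a rule for which this works is the missing idea. As it stands, the proposal proves only the implication from the level-$n$ families to the theorem, not the theorem itself.
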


We shall use Theorem~\ref{thm:BDW} only through the following consequence.

\begin{cor}\label{cor:Grm}
	For every integer $r \ge 2$ there exists a finite constant $B_r$ and graphs
	\[
	G_{r,1}, G_{r,2}, G_{r,3}, \ldots
	\]
	such that, for every $m \ge 1$,
	\begin{enumerate}[(i)]
		\item $\omega(G_{r,m}) = r$;
		\item $\chi(G_{r,m}) \ge m$;
		\item if $H$ is an induced subgraph of $G_{r,m}$ and $\omega(H) \le r-1$, then $\chi(H) \le B_r$.
	\end{enumerate}
\end{cor}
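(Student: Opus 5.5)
Fix $r \ge 2$ and apply Theorem~\ref{thm:BDW} to the function $f$ defined by $f(1)=1$, $f(n)=\binom{3n+1}{3}$ for $2\le n\le r-1$, $f(r)=\infty$, and $f(n)=\binom{3n+1}{3}$ (or $\infty$; the choice does not matter) for $n>r$. The hypotheses hold because $\infty \ge \binom{3r+1}{3}$. This yields a hereditary class $\calG$ with $\chi_{\max}(\calG,n)=f(n)$ for every $n$.

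We set $B_r=\max\{f(n):1\le n\le r-1\}$. This is a finite constant depending only on $r$.

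Since $\chi_{\max}(\calG,r)=\infty$, for each $m\ge 1$ there is a graph $G_{r,m}\in\calG$ with $\omega(G_{r,m})=r$ and $\chi(G_{r,m})\ge m$. This gives (i) and (ii) directly.

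For (iii), let $H$ be an induced subgraph of $G_{r,m}$ with $\omega(H)\le r-1$. If $H$ is empty, then $\chi(H)=0$ by the stated convention. Otherwise $n:=\omega(H)$ satisfies $1\le n\le r-1$. Because $\calG$ is hereditary, $H\in\calG$, and so $\chi(H)\le \chi_{\max}(\calG,n)=f(n)\le B_r$.

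There is no real obstacle here. The only points needing care are that $f(r)=\infty$ is permitted by Theorem~\ref{thm:BDW}, and that heredity of $\calG$ is what allows us to apply the bound $\chi_{\max}(\calG,n)$ to induced subgraphs of $G_{r,m}$.
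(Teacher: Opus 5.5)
Your proposal is correct and follows the paper's proof essentially verbatim. You use the same choice of $f$ (the paper sets $f(n)=\infty$ for all $n\ge r$, and your values for $n>r$ are immaterial), the same constant $B_r=\max\{f(n):1\le n\le r-1\}$, and the same appeal to heredity of the class for (iii), including the empty-graph case.
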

\begin{proof}
	Fix $r \ge 2$. Define
	\[
	f_r(1) = 1,
	\qquad
	f_r(n) = \binom{3n+1}{3} \;\; \text{for } 2 \le n < r,
	\qquad
	\text{and~} f_r(n) = \infty \;\; \text{for } n \ge r.
	\]
	The function $f_r$ satisfies the hypotheses of Theorem~\ref{thm:BDW}. Hence there exists a hereditary class $\mathcal{G}_r$ such that \(\chi_{\max}(\mathcal{G}_r, n) = f_r(n)\) for every $n \ge 1$.
	Since $f_r(r) = \infty$, the chromatic numbers of graphs in $\mathcal{G}_r$ with clique number exactly $r$ are unbounded. Therefore, for every $m \ge 1$, we may choose a graph $G_{r,m} \in \mathcal{G}_r$ such that \(\omega(G_{r,m}) = r\) and \(\chi(G_{r,m}) \ge m\).
This proves (i) and (ii).
	
	It remains to prove (iii).
	Set \(B_r = \max\{ f_r(i) : 1 \le i \le r-1 \}\).
	This is finite because $f_r(i) < \infty$ for $1 \le i \le r-1$.
	Let $H$ be an induced subgraph of $G_{r,m}$ and suppose $\omega(H) \le r-1$. Since $\mathcal{G}_r$ is hereditary and $G_{r,m} \in \mathcal{G}_r$, we have $H \in \mathcal{G}_r$. Let $i = \omega(H)$. Then $1 \le i \le r-1$ unless $H$ is empty, and the empty case is trivial. By the definition of $\chi_{\max}$, \(\chi(H) \le \chi_{\max}(\mathcal{G}_r, i) = f_r(i) \le B_r\).
	This proves (iii).
\end{proof}

The second ingredient is the elementary Mycielski construction~\cite{Mycielski1955}, which we include with proof to make the tag graphs explicit.
Specifically, for every $r \ge 2$ we need a triangle-free graph of chromatic number exactly $r$. 
The following standard construction supplies such graphs explicitly.

Let $G$ be a graph with vertex set $V(G) = \{v_1, \ldots, v_n\}$. 
The \textit{Mycielskian} $M(G)$ is the graph with vertex set \(\{v_1, \ldots, v_n\} \cup \{u_1, \ldots, u_n\} \cup \{w\}\) and with the following edges:
\begin{itemize}
	\item all edges $v_i v_j$ that occur in $G$;
	\item for every edge $v_i v_j$ of $G$, the two edges $u_i v_j$ and $u_j v_i$;
	\item the edges $w u_i$ for all $i \in \{1, \ldots, n\}$.
\end{itemize}
There are no other edges.
In particular, the vertices $u_1, \ldots, u_n$ are pairwise non-adjacent, and $w$ is adjacent only to the vertices $u_i$.

\begin{lemma}[Mycielski \cite{Mycielski1955}]\label{lem:mycielski}
	If $G$ is triangle-free and $\chi(G) = q \ge 2$, then $M(G)$ is triangle-free and $\chi(M(G)) = q + 1$.
	Consequently, for every integer $r \ge 2$ there exists a connected triangle-free graph $T_r$ such that $\omega(T_r) = 2$ and $\chi(T_r) = r$.
\end{lemma}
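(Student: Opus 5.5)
We prove the two assertions of Lemma~\ref{lem:mycielski} separately: first triangle-freeness of $M(G)$, then the value of $\chi(M(G))$. The second sentence then follows by iterating the construction from $K_2$.

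\emph{Triangle-freeness.} By construction, the only edges are $v_iv_j$, $u_iv_j$ (when $v_iv_j\in E(G)$) and $wu_i$. Suppose a triangle exists; we split into cases according to its vertices.
\begin{itemize}
\item If it contains $w$, its other two vertices are neighbours of $w$, hence both among the $u_i$. These are pairwise non-adjacent, a contradiction.
\item Otherwise it contains at most one $u$-vertex, since the $u_i$ form an independent set. If it has none, it is a triangle in $G$, which is impossible.
\item If it has exactly one, say $u_i,v_j,v_k$, then $v_jv_k\in E(G)$. Moreover $u_iv_j$ is an edge only if $v_iv_j\in E(G)$, and similarly $v_iv_k\in E(G)$. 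Hence $v_i,v_j,v_k$ is a triangle in $G$; note $i\ne j,k$ because $v_iv_j$ is an edge. This is again a contradiction.
\end{itemize}

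\emph{Upper bound $\chi(M(G))\le q+1$.} Take a proper $q$-colouring $c$ of $G$. Give $v_i$ and $u_i$ the colour $c(v_i)$, and give $w$ a new colour $q+1$. This is proper:
\begin{itemize}
\item an edge $u_iv_j$ comes from an edge $v_iv_j$, so $c(v_i)\ne c(v_j)$;
\item the $u_i$ are pairwise non-adjacent;
\item $w$ has a colour used nowhere else.
\end{itemize}

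\emph{Lower bound $\chi(M(G))\ge q+1$.} This is the main step. Suppose for contradiction that $M(G)$ has a proper $q$-colouring $c$ with colours $\{1,\dots,q\}$, and say $c(w)=q$. Then no $u_i$ has colour $q$. Define a colouring $c'$ of $G$ by $c'(v_i)=c(u_i)$ if $c(v_i)=q$, and $c'(v_i)=c(v_i)$ otherwise; it uses only colours $1,\dots,q-1$.

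To check that $c'$ is proper, take an edge $v_iv_j$ of $G$.
\begin{itemize}
\item Both endpoints cannot have colour $q$ under $c$.
\item If neither does, then $c'(v_i)=c(v_i)\ne c(v_j)=c'(v_j)$.
\item If only $c(v_i)=q$, then $c'(v_i)=c(u_i)$. Since $u_iv_j$ is an edge of $M(G)$, we get $c(u_i)\ne c(v_j)=c'(v_j)$.
\end{itemize}
Thus $c'$ is a proper $(q-1)$-colouring of $G$, contradicting $\chi(G)=q$.

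\emph{Consequence.} Set $T_2=K_2$, which is connected, triangle-free and has $\chi=2$, and define $T_{r+1}=M(T_r)$. By induction each $T_r$ is triangle-free with $\chi(T_r)=r$; this uses $q\ge2$ at every step.

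Connectivity is preserved by the construction, since $G$ has no isolated vertices (it is connected with at least two vertices):
\begin{itemize}
\item the $v$-part is connected, being a copy of $G$;
\item each $u_i$ is adjacent to $v_j$ for any neighbour $v_j$ of $v_i$ in $G$;
\item $w$ is adjacent to every $u_i$.
\end{itemize}

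Finally, $\omega(T_r)=2$, because $T_r$ is triangle-free and has an edge.
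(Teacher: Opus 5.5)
Your proof is correct and takes essentially the same approach as the paper. The case analysis for triangle-freeness, the $(q+1)$-colouring, the recolouring of $v_i$ by $c(u_i)$, and the induction from $T_2=K_2$ with the connectivity check all match. The only difference is cosmetic: you phrase the lower bound as a contradiction from an assumed $q$-colouring, while the paper starts from an arbitrary $s$-colouring and derives $s-1\ge q$.
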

\begin{proof}
	First we prove that $M(G)$ is triangle-free when $G$ is triangle-free. 
	A triangle in $M(G)$ that contains $w$ would need two neighbors of $w$ that are adjacent to each other. 
	But the neighbors of $w$ are precisely $u_1,\ldots,u_n$, and these vertices are pairwise non-adjacent. 
	Thus no triangle contains $w$.
	
	Now consider a triangle not containing $w$. 
	If it uses only vertices of the form $v_i$, then it is already a triangle of $G$, impossible. 
	If it uses two vertices of the form $u_i$, then it cannot be a triangle because no two vertices in \(\{u_1,u_2,\ldots,u_n\}\) are adjacent. 
	Hence the only remaining possibility is a triangle of the form $u_i v_j v_\ell$.
	For $u_i$ to be adjacent to both $v_j$ and $v_\ell$, the edges $v_i v_j$ and $v_i v_\ell$ must be in $G$ (by the definition of $M(G)$). 
	Since $v_j v_\ell$ is also an edge of the alleged triangle, the vertices $v_i, v_j, v_\ell$ form a triangle in $G$, again impossible. 
	Therefore $M(G)$ is triangle-free.
	
	Next we prove the chromatic number formula. 
	Let $c$ be a proper $q$-coloring of $G$. 
	Color each old vertex $v_i$ of $M(G)$ by $c(v_i)$, color each new vertex $u_i$ by $c(v_i)$, and color $w$ with one additional color. 
	This is a proper coloring of $M(G)$ using $q+1$ colors. 
	Indeed, if $u_i$ is adjacent to $v_j$, then $v_i$ is adjacent to $v_j$ in $G$, so $c(v_i) \ne c(v_j)$. 
	Thus $\chi(M(G)) \le q+1$.
	
	For the reverse inequality, suppose that $M(G)$ has a proper coloring with $s$ colors. 
	Relabel the colors so that $w$ has color $s$. 
	Since $w$ is adjacent to every $u_i$, no $u_i$ has color $s$. 
	We construct a coloring $c'$ of $G$ using only the colors $1,\ldots,s-1$ as follows:
	\[
	c'(v_i) = 
	\begin{cases}
		c(u_i), & \text{if } c(v_i) = s,\\
		c(v_i), & \text{if } c(v_i) \ne s.
	\end{cases}
	\]
	We check that $c'$ is proper. 
	Let $v_i v_j \in E(G)$. 
	If neither $v_i$ nor $v_j$ has color $s$ under $c$, then $c'(v_i)=c(v_i)$ and $c'(v_j)=c(v_j)$, which are distinct because $v_i$ and $v_j$ are adjacent in the old copy of $G$ inside $M(G)$. 
	If, say, $c(v_i)=s$ and $c(v_j)\ne s$, then $c'(v_i)=c(u_i)$ and $c'(v_j)=c(v_j)$. 
	The vertices $u_i$ and $v_j$ are adjacent in $M(G)$ because $v_i$ and $v_j$ are adjacent in $G$. 
	Hence $c(u_i) \ne c(v_j)$, so $c'(v_i) \ne c'(v_j)$. 
	The symmetric case is the same. 
	Finally, both $c(v_i)=s$ and $c(v_j)=s$ cannot occur because $v_i$ and $v_j$ are adjacent in $M(G)$. 
	Therefore $c'$ is a proper $(s-1)$-coloring of $G$. 
	Since $\chi(G)=q$, we get $s-1 \ge q$, so $s \ge q+1$. 
	Hence $\chi(M(G)) \ge q+1$.
	Together with the upper bound, this gives $\chi(M(G)) = q+1$.
	
	To obtain $T_r$, set $T_2 = K_2$ and define recursively $T_{r+1} = M(T_r)$ for $r \ge 2$. 
	The graph $K_2$ is triangle-free and has chromatic number $2$. 
	By the two properties just proved, induction gives that $T_r$ is triangle-free and $\chi(T_r) = r$ for every $r \ge 2$. 
	Since $T_r$ contains an edge and is triangle-free, its clique number is $2$. 
	The same induction also gives connectedness. 
	$K_2$ is connected, and if $G$ is connected with at least one edge, then every vertex $u_i$ of $M(G)$ is adjacent to $w$, while every old vertex $v_i$ has a neighbor $v_j$ in $G$ and hence is adjacent to $u_j$ in $M(G)$.
	Thus $M(G)$ is connected.
	This proves Lemma~\ref{lem:mycielski}.
\end{proof}

\section{Proofs}

If $G$ and $H$ are graphs, then $G \cup H$ denotes their disjoint union.

For every integer $r \ge 2$, fix graphs $G_{r,1}, G_{r,2}, \ldots$ and a constant $B_r$ as in Corollary~\ref{cor:Grm}.
Also fix a triangle-free graph $T_r$ with $\omega(T_r) = 2$ and $\chi(T_r) = r$, as supplied by Lemma~\ref{lem:mycielski}.
For every $r \ge 2$ and every $m \ge 1$, define $X_{r,m} = G_{r,m} \cup T_r$.
Let $\mathcal{C}_r = \{ X_{r,m} : m \ge 1 \}$ and finally set $\mathcal{C} = \bigcup_{r \ge 2} \mathcal{C}_r$.

The following immediate facts are the heart of the construction.

\begin{obs}\label{obs:block}
	For every $r \ge 2$ and every $m \ge 1$,
	\begin{enumerate}[(i)]
		\item $\omega(X_{r,m}) = r$;
		\item $\chi(X_{r,m}) \ge m$;
		\item $X_{r,m}$ contains $T_r$ as an induced subgraph, and this induced subgraph satisfies $\omega(T_r) = 2$ and $\chi(T_r) = r$.
	\end{enumerate}
\end{obs}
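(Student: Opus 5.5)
The plan is to read off each item from the two ingredients, using the standard behaviour of $\omega$ and $\chi$ under disjoint unions. For graphs $G$ and $H$ with no edges between them, every clique of $G \cup H$ lies entirely inside one of the two components, since any two vertices of a clique are adjacent. Hence $\omega(G\cup H)=\max\{\omega(G),\omega(H)\}$. In the same spirit, proper colorings of $G$ and $H$ may reuse the same palette, and any proper coloring of the union restricts to proper colorings of both parts. Hence $\chi(G\cup H)=\max\{\chi(G),\chi(H)\}$. I will state these two facts first and then apply them with $G=G_{r,m}$ and $H=T_r$.

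For (i), Corollary~\ref{cor:Grm}(i) gives $\omega(G_{r,m})=r$, and Lemma~\ref{lem:mycielski} gives $\omega(T_r)=2$. Since $r\ge 2$, the union formula yields $\omega(X_{r,m})=\max\{r,2\}=r$.

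For (ii), I only need monotonicity of $\chi$ under taking induced subgraphs, which is the easy half of the union formula. $G_{r,m}$ is an induced subgraph of $X_{r,m}$, and Corollary~\ref{cor:Grm}(ii) gives $\chi(G_{r,m})\ge m$, so $\chi(X_{r,m})\ge \chi(G_{r,m})\ge m$.

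For (iii), the copy of $T_r$ is a union of components of $X_{r,m}$. Therefore the subgraph of $X_{r,m}$ induced on its vertex set is exactly $T_r$, with no extra edges coming from $G_{r,m}$. The values $\omega(T_r)=2$ and $\chi(T_r)=r$ are then quoted directly from Lemma~\ref{lem:mycielski}.

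I expect no real obstacle. The only point needing care is in (i), where one must remember that $r\ge2$, so that the tag $T_r$ does not raise the clique number. This is precisely why $T_r$ was chosen triangle-free.
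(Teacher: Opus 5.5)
Your proposal is correct and follows essentially the same route as the paper. You compute $\omega$ of the disjoint union as a maximum, bound $\chi(X_{r,m})$ below by $\chi(G_{r,m})\ge m$, and note that $T_r$ is a component and hence an induced subgraph with the values from Lemma~\ref{lem:mycielski}. Using monotonicity of $\chi$ instead of the full maximum formula in (ii) is only a cosmetic difference.
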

\begin{proof}
	The clique number of a disjoint union is the maximum of the clique numbers of its components. Therefore, by Corollary~\ref{cor:Grm}
	\[
	\omega(X_{r,m}) = \max\{\omega(G_{r,m}), \omega(T_r)\} = \max\{r, 2\} = r,
	\]
	since $r \ge 2$. The chromatic number of a disjoint union is likewise the maximum of the chromatic numbers of its components, so, by Corollary~\ref{cor:Grm}
	\[
	\chi(X_{r,m}) = \max\{\chi(G_{r,m}), \chi(T_r)\} \ge \chi(G_{r,m}) \ge m.
	\]
	Finally, $T_r$ is a component of $X_{r,m}$, hence an induced subgraph, and Lemma~\ref{lem:mycielski} gives $\omega(T_r) = 2$ and $\chi(T_r) = r$.
	This proves Observation~\ref{obs:block}.
\end{proof}

\noindent\textit{Remark.} The class $\mathcal{C}$ is plainly not hereditary: for example, $T_r$ is an induced subgraph of every $X_{r,m}$, but $T_r \notin \mathcal{C}$, since every member of $\mathcal{C}$ is disconnected (as a disjoint union of two non-empty graphs), whereas the Mycielski graphs $T_r$ are connected.
The study of $\chi$-bounded graph classes asks when the chromatic number can be controlled by the clique number throughout a class. We use the non-hereditary convention used by Chudnovsky, Cook, Davies, and Oum \cite{CCDO2026}. Thus a class need not be hereditary, but the bound is required to hold for all induced subgraphs of its members.

\medskip

We now prove the first half of Theorem~\ref{thm:main}.

\begin{prop}\label{prop:pollyanna}
	The class $\mathcal{C}$ is Pollyanna.
\end{prop}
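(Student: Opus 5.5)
Let $\mathcal{F}$ be an arbitrary $\chi$-bounded class with $\chi$-bounding function $\varphi$; we may assume $\varphi$ is non-decreasing, replacing it by $n\mapsto\max_{i\le n}\varphi(i)$ if necessary. The goal is a single polynomial $p$ with $\chi(H)\le p(\omega(H))$ for every $G\in\mathcal{C}\cap\mathcal{F}$ and every induced subgraph $H$ of $G$. The tag graphs force $r$ to be bounded. Indeed, if $X_{r,m}\in\mathcal{F}$, then by Observation~\ref{obs:block}(iii) the graph $X_{r,m}$ has $T_r$ as an induced subgraph with $\omega(T_r)=2$, so $r=\chi(T_r)\le\varphi(2)$. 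Hence $\mathcal{C}\cap\mathcal{F}\subseteq\bigcup_{2\le r\le R}\mathcal{C}_r$ with $R=\varphi(2)$, which is a constant depending only on $\mathcal{F}$.

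Next we bound the chromatic number of an induced subgraph $H$ of some $X_{r,m}\in\mathcal{C}\cap\mathcal{F}$ with $r\le R$. Since $\omega(H)\le\omega(X_{r,m})=r\le R$ by Observation~\ref{obs:block}(i), the $\chi$-boundedness of $\mathcal{F}$ gives $\chi(H)\le\varphi(\omega(H))\le\varphi(R)$. Set $K=\varphi(R)=\varphi(\varphi(2))$, a constant depending only on $\mathcal{F}$. Then every induced subgraph $H$ of a member of $\mathcal{C}\cap\mathcal{F}$ satisfies $\chi(H)\le K$. The constant polynomial $p\equiv K$ (or $p(x)=Kx$ to accommodate the empty-graph convention, though $\chi(\emptyset)=0$ already fits) is therefore a polynomial $\chi$-bounding function for $\mathcal{C}\cap\mathcal{F}$. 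Since $\mathcal{F}$ was arbitrary, $\mathcal{C}$ is Pollyanna.

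The only delicate point is the first step: we must check that the $\chi$-bounding condition for $\mathcal{F}$ applies to the induced subgraph $T_r$ of $X_{r,m}$ even though $T_r$ itself need not lie in $\mathcal{F}$ or $\mathcal{C}$. This is exactly the non-hereditary convention stated in the definition, where the bound is required for all induced subgraphs of members. Note also that this argument never uses Corollary~\ref{cor:Grm}(iii); that property is reserved for the failure of $k$-strong Pollyanna-ness. Once the bound on $r$ is in place, the rest is immediate, since bounded clique number together with $\chi$-boundedness gives uniformly bounded chromatic number.
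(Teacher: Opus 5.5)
Your proof is correct and follows essentially the same route as the paper. The tag $T_r$ forces $r\le\varphi(2)$, after which bounded clique number plus $\chi$-boundedness gives a constant, hence polynomial, bound. The only cosmetic difference is that you pass to a non-decreasing envelope of $\varphi$ and bound each induced subgraph $H$ directly. The paper instead avoids monotonicity by taking $M_{\mathcal{F}}=\max\{\varphi(s):1\le s\le\max\{2,\varphi(2)\}\}$, bounding $\chi(X_{r,m})$ first, and then passing to induced subgraphs.
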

\begin{proof}
	Let $\mathcal{F}$ be an arbitrary $\chi$-bounded graph class. Choose a $\chi$-bounding function \(\varphi : \mathbb{N} \to \mathbb{N}\) for $\mathcal{F}$.
	Thus, whenever $Y \in \mathcal{F}$ and $Z$ is an induced subgraph of $Y$, we have \(\chi(Z) \le \varphi(\omega(Z))\).
	(Here no monotonicity of $\varphi$ is assumed or needed.)
	
	We must show that $\mathcal{C} \cap \mathcal{F}$ is polynomially $\chi$-bounded. If $\mathcal{C} \cap \mathcal{F} = \emptyset$, there is nothing to prove. Otherwise, take any graph \(X_{r,m} \in \mathcal{C} \cap \mathcal{F}\).
	By Observation~\ref{obs:block}, $T_r$ is an induced subgraph of $X_{r,m}$ with \(\omega(T_r) = 2\) and \(\chi(T_r) = r\).
	Since $X_{r,m} \in \mathcal{F}$ and $\varphi$ bounds the chromatic number of every induced subgraph of every graph in $\mathcal{F}$, we obtain \(r = \chi(T_r) \le \varphi(\omega(T_r)) = \varphi(2)\).
	Thus every member of $\mathcal{C} \cap \mathcal{F}$ belongs to one of the finitely many blocks \(\mathcal{C}_2, \mathcal{C}_3, \ldots, \mathcal{C}_{\varphi(2)}\).
	In particular, the parameter $r$ is bounded solely in terms of the test class $\mathcal{F}$.
	
	Next, we use the fact that $X_{r,m}$ itself belongs to $\mathcal{F}$. By Observation~\ref{obs:block}, $\omega(X_{r,m}) = r$. Hence the $\chi$-bounding inequality for $\mathcal{F}$ gives \(\chi(X_{r,m}) \le \varphi(\omega(X_{r,m})) = \varphi(r)\).
	Because $r \le \varphi(2)$, this is bounded by the finite constant \(M_{\mathcal{F}} = \max\{\varphi(s) : 1 \le s \le R_{\mathcal{F}}\}\), where \(R_{\mathcal{F}} = \max\{2, \varphi(2)\}\).
	Thus every graph $X_{r,m} \in \mathcal{C} \cap \mathcal{F}$ satisfies \(\chi(X_{r,m}) \le M_{\mathcal{F}}\).

	Now let $Z$ be an induced subgraph of some $X_{r,m} \in \mathcal{C} \cap \mathcal{F}$. Since the chromatic number does not increase when passing to an induced subgraph, we have \(\chi(Z) \le \chi(X_{r,m}) \le M_{\mathcal{F}}\).
	Consequently, the constant polynomial \(p_{\mathcal{F}}(x) = M_{\mathcal{F}}\) is a polynomial $\chi$-bounding function for $\mathcal{C} \cap \mathcal{F}$.
	Since $\mathcal{F}$ was arbitrary, $\mathcal{C}$ is Pollyanna.	
	This proves Proposition~\ref{prop:pollyanna}.
\end{proof}

\noindent\textit{Remark.}
The proof shows explicitly why the tags $T_r$ are useful: 
a $\chi$-bounded class cannot contain tags of clique number $2$ 
and arbitrarily large chromatic number. 
Therefore, when intersected with a $\chi$-bounded class, 
the union over all $r$ collapses to only finitely many $r$-levels.

It remains to prove that no fixed $k$ suffices for strong Pollyanna-ness. 
The witness against $k$-strong Pollyanna-ness will be the hereditary closure of a single block $\mathcal{C}_r$ with $r = k+1$.

For a graph class $\mathcal{A}$, write
\[
\operatorname{Ind}(\mathcal{A}) = \{ H : H \text{ is an induced subgraph of some } G \in \mathcal{A} \}
\]
for its hereditary closure.

We begin with:

\begin{lemma}\label{lem:blockgood}
	For every integer $r \ge 2$, the hereditary class \(\mathcal{H}_r = \operatorname{Ind}(\mathcal{C}_r)\) is $(r-1)$-good.
\end{lemma}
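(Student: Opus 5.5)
We need to show two things: $\mathcal{H}_r$ is hereditary, and there is a constant $m$ with $\chi^{(r-1)}(G)\le m$ for every $G\in\mathcal{H}_r$. Heredity is immediate from the definition of $\operatorname{Ind}$, because an induced subgraph of an induced subgraph of $X_{r,m}$ is itself an induced subgraph of $X_{r,m}$. All the work is in the bound, and the constant I will aim for is $m=\max\{B_r, r\}$, with $B_r$ taken from Corollary~\ref{cor:Grm}.

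Fix $G\in\mathcal{H}_r$ and let $H$ be an induced subgraph of $G$ with $\omega(H)\le r-1$. Then $H$ is also an induced subgraph of some $X_{r,m'}=G_{r,m'}\cup T_r$. I split $H$ along the two components of the disjoint union. Write $H_1=H[V(H)\cap V(G_{r,m'})]$ and $H_2=H[V(H)\cap V(T_r)]$. Since there are no edges between the two parts, $H=H_1\cup H_2$ is a disjoint union. This gives $\chi(H)=\max\{\chi(H_1),\chi(H_2)\}$, with the convention $\chi(\emptyset)=0$ when a part is empty.

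For $H_1$, note that it is an induced subgraph of $G_{r,m'}$ and that $\omega(H_1)\le\omega(H)\le r-1$. Corollary~\ref{cor:Grm}(iii) then gives $\chi(H_1)\le B_r$. For $H_2$, it is an induced subgraph of $T_r$, so $\chi(H_2)\le\chi(T_r)=r$. Together these give $\chi(H)\le\max\{B_r,r\}$. Taking the maximum over all such $H$ yields $\chi^{(r-1)}(G)\le\max\{B_r,r\}$, which is independent of $G$.

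I do not expect a real obstacle here. The one point to be careful about is that the bound in Corollary~\ref{cor:Grm}(iii) is uniform in $m'$, which it is, since $B_r$ depends only on $r$. The second point is that the tag $T_r$ contributes only the fixed constant $r$. In particular, we do not need any clique-number restriction on $H_2$; the crude bound $\chi(H_2)\le r$ suffices.
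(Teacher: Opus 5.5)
Your proof is correct and is essentially the paper's argument: the same constant $\max\{B_r, r\}$, the same split of $H$ into its parts inside $G_{r,m'}$ and $T_r$, and the same use of Corollary~\ref{cor:Grm}(iii) together with $\chi(T_r)=r$. Your remark that $B_r$ does not depend on $m'$ is exactly the point that makes the bound uniform over $\mathcal{H}_r$.
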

\begin{proof}
	The class $\mathcal{H}_r$ is hereditary by definition. 
	We must prove that there is a constant $W_r$ such that \(\chi^{(r-1)}(Y) \le W_r\) for every $Y \in \mathcal{H}_r$.
	
	Let \(W_r = \max\{ B_r, r \}\), where $B_r$ is the constant from Corollary~\ref{cor:Grm}.
	We claim that this $W_r$ works.
	Take any $Y \in \mathcal{H}_r$.
	By definition of $\mathcal{H}_r$, there exists $m \ge 1$ such that \(Y\) is an induced subgraph of \(X_{r,m} = G_{r,m} \cup T_r\).
	To bound $\chi^{(r-1)}(Y)$, let $Z$ be any induced subgraph of $Y$ with $\omega(Z) \le r-1$. 
	Then $Z$ is also an induced subgraph of $X_{r,m}$. 
	Since $X_{r,m}$ is a disjoint union of $G_{r,m}$ and $T_r$, the graph $Z$ decomposes as a disjoint union \(Z = Z_G \cup Z_T\), where $Z_G$ is an induced subgraph of $G_{r,m}$ and $Z_T$ is an induced subgraph of $T_r$.
	
	The clique number of each component of a disjoint union is at most the clique number of the whole union. Hence \(\omega(Z_G) \le \omega(Z) \le r-1\).
	By Corollary~\ref{cor:Grm} (iii), this implies $\chi(Z_G) \le B_r$. 
	Also, $\chi(Z_T) \le \chi(T_r) = r$.
	Analogously, the chromatic number of a disjoint union is the maximum of the chromatic numbers of its components, so \(\chi(Z) = \max\{\chi(Z_G), \chi(Z_T)\} \le \max\{B_r, r\} = W_r\).
	This bound holds for every induced subgraph $Z$ of $Y$ with $\omega(Z) \le r-1$.
	Therefore \(\chi^{(r-1)}(Y) \le W_r\).
	Since $Y \in \mathcal{H}_r$ was arbitrary, $\mathcal{H}_r$ is $(r-1)$-good.
	This proves Lemma~\ref{lem:blockgood}.
\end{proof}

\begin{lemma}\label{lem:blockbad}
	For every integer $r \ge 2$, the class \(\mathcal{C} \cap \mathcal{H}_r\) is not polynomially $\chi$-bounded.
\end{lemma}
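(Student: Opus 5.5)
The plan is to show that $\mathcal{C} \cap \mathcal{H}_r$ contains the entire block $\mathcal{C}_r$. Inside this block the clique number is constant, equal to $r$, while the chromatic number is unbounded. No function of $\omega$ at all, let alone a polynomial, can then bound $\chi$.

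First, every $X_{r,m}$ lies in $\mathcal{C}_r \subseteq \mathcal{C}$ by definition. Each graph is an induced subgraph of itself, so $X_{r,m} \in \operatorname{Ind}(\mathcal{C}_r) = \mathcal{H}_r$. Hence $\mathcal{C}_r \subseteq \mathcal{C} \cap \mathcal{H}_r$.

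Next, suppose for contradiction that $\mathcal{C} \cap \mathcal{H}_r$ is polynomially $\chi$-bounded with some polynomial $p$. Apply the bound to $H = X_{r,m}$ itself, viewed as an induced subgraph of the member $X_{r,m}$. Observation~\ref{obs:block}(i) and (ii) then give
\[
m \le \chi(X_{r,m}) \le p(\omega(X_{r,m})) = p(r)
\]
for every $m \ge 1$. Taking $m = p(r) + 1$, or any integer exceeding the finite value $p(r)$, gives a contradiction.

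There is no real obstacle here. The only point requiring care is that the definition of polynomial $\chi$-boundedness quantifies over induced subgraphs of members, and the member itself is among them. In fact the argument shows more: $\mathcal{C} \cap \mathcal{H}_r$ is not even $\chi$-bounded.
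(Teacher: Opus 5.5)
Your proposal is correct and follows the paper's proof essentially verbatim: you show $\mathcal{C}_r \subseteq \mathcal{C} \cap \mathcal{H}_r$, then apply a putative polynomial bound $p$ to each member $X_{r,m}$ itself and use Observation~\ref{obs:block} to get $m \le p(r)$ for all $m$, which is a contradiction. Your closing remark that the intersection is not even $\chi$-bounded is also correct, since the same argument works with any function $\varphi$ in place of $p$.
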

\begin{proof}
	Since every graph in $\mathcal{C}_r$ belongs to its hereditary closure $\mathcal{H}_r$, we have \(\mathcal{C}_r \subseteq \mathcal{C} \cap \mathcal{H}_r\).
	For every $m \ge 1$, Observation~\ref{obs:block} gives \(\omega(X_{r,m}) = r\) and \(\chi(X_{r,m}) \ge m\).
	Thus $\mathcal{C} \cap \mathcal{H}_r$ contains graphs of fixed clique number $r$ and arbitrarily large chromatic number.
	
	Suppose, for contradiction, that $\mathcal{C} \cap \mathcal{H}_r$ were polynomially $\chi$-bounded. 
	Then there would exist a polynomial $p$ such that every induced subgraph \(H\) of every graph \(G\) in $\mathcal{C} \cap \mathcal{H}_r$ has chromatic number at most $p(\omega(H))$. 
	Applying this to the graphs $X_{r,m}$ themselves (each is an induced subgraph of itself) yields \(\chi(X_{r,m}) \le p(\omega(X_{r,m})) = p(r)\) for every $m \ge 1$. 
	But $p(r)$ is a fixed finite number, while $\chi(X_{r,m}) \ge m$ is unbounded as $m \to \infty$. 
	Choosing $m > p(r)$ gives a contradiction. 
	Hence $\mathcal{C} \cap \mathcal{H}_r$ is not polynomially $\chi$-bounded.
	This proves Lemma~\ref{lem:blockbad}.
\end{proof}

We can now finish the proof of the main theorem.

\begin{proof}[\bf Proof of Theorem \ref{thm:main}]
	For every integer $r \ge 2$, fix graphs \(G_{r,1}, G_{r,2}, \ldots\) and a constant $B_r$ as in Corollary~\ref{cor:Grm}.
	Also fix a triangle-free graph $T_r$ with \(\omega(T_r) = 2\) and \(\chi(T_r) = r\), as supplied by Lemma~\ref{lem:mycielski}.
	For every $r \ge 2$ and every $m \ge 1$, define \(X_{r,m} = G_{r,m} \cup T_r\).
	Let \(\mathcal{C}_r = \{ X_{r,m} : m \ge 1 \}\) and finally set \(\mathcal{C} = \bigcup_{r \ge 2} \mathcal{C}_r\).
	Proposition~\ref{prop:pollyanna} shows that $\mathcal{C}$ is Pollyanna.
	It remains to prove that $\mathcal{C}$ is not strongly Pollyanna.
	
	Let $k \ge 1$ be arbitrary and set $r = k+1$ (so $r \ge 2$). 
	By Lemma~\ref{lem:blockgood}, the hereditary class \(\mathcal{H}_r = \operatorname{Ind}(\mathcal{C}_r)\)
	is $(r-1)$-good, i.e., $k$-good. 
	However, by Lemma~\ref{lem:blockbad}, the intersection \(\mathcal{C} \cap \mathcal{H}_r\) is not polynomially $\chi$-bounded. 
	Therefore $\mathcal{C}$ is not $k$-strongly Pollyanna.
	Since $k \ge 1$ was arbitrary, $\mathcal{C}$ is not $k$-strongly Pollyanna for any $k$; hence $\mathcal{C}$ is not strongly Pollyanna. 
	Finally, as observed after the construction, $\mathcal{C}$ is not hereditary.
	This proves Theorem \ref{thm:main}.
\end{proof}

\vspace{6mm}

\n{\bf Acknowledgements:} 
The research is partially supported by Fujian Key Laboratory of Granular Computing and Applications (MinnanNormal University), Institute of Meteorological Big Data-Digital Fujian and Fujian Key Laboratory of Data Science and Statistics.

\paragraph{Data availability.}
Data sharing is not applicable to this article as no datasets were generated or
analysed during the current study.

\paragraph{Conflict of interest.}
The authors have no relevant financial or non-financial interests to disclose.

\end{document}